\let\mathcal\mathscr
\begin{document}
\newtheorem{problem}{Problem}
\newtheorem{theorem}{Theorem}
\newtheorem{lemma}[theorem]{Lemma}
\newtheorem{claim}[theorem]{Claim}
\newtheorem{cor}[theorem]{Corollary}
\newtheorem{prop}[theorem]{Proposition}
\newtheorem{definition}{Definition}
\newtheorem{question}[theorem]{Question}

\numberwithin{equation}{section}
\numberwithin{theorem}{section}

%%%%%%%%%%%%%%%%%%%%%%%%%
% Alphabet calligraphic %
%%%%%%%%%%%%%%%%%%%%%%%%%
\def\cA{{\mathcal A}}
\def\cB{{\mathcal B}}
\def\cC{{\mathcal C}}
\def\cD{{\mathcal D}}
\def\cE{{\mathcal E}}
\def\cF{{\mathcal F}}
\def\cG{{\mathcal G}}
\def\cH{{\mathcal H}}
\def\cI{{\mathcal I}}
\def\cJ{{\mathcal J}}
\def\cK{{\mathcal K}}
\def\cL{{\mathcal L}}
\def\cM{{\mathcal M}}
\def\cN{{\mathcal N}}
\def\cO{{\mathcal O}}
\def\cP{{\mathcal P}}
\def\cQ{{\mathcal Q}}
\def\cR{{\mathcal R}}
\def\cS{{\mathcal S}}
\def\cT{{\mathcal T}}
\def\cU{{\mathcal U}}
\def\cV{{\mathcal V}}
\def\cW{{\mathcal W}}
\def\cX{{\mathcal X}}
\def\cY{{\mathcal Y}}
\def\cZ{{\mathcal Z}}

%%%%%%%%%%%%%%%%%%%%%%%
% Alphabet blackboard %
%%%%%%%%%%%%%%%%%%%%%%%
\def\A{{\mathbb A}}
\def\B{{\mathbb B}}
\def\C{{\mathbb C}}
\def\D{{\mathbb D}}
\def\E{{\mathbb E}}
\def\F{{\mathbb F}}
\def\G{{\mathbb G}}
\def\I{{\mathbb I}}
\def\J{{\mathbb J}}
\def\K{{\mathbb K}}
\def\L{{\mathbb L}}
\def\M{{\mathbb M}}
\def\N{{\mathbb N}}
\def\O{{\mathbb O}}
\def\P{{\mathbb P}}
\def\Q{{\mathbb Q}}
\def\R{{\mathbb R}}
\def\S{{\mathbb S}}
\def\T{{\mathbb T}}
\def\U{{\mathbb U}}
\def\V{{\mathbb V}}
\def\W{{\mathbb W}}
\def\X{{\mathbb X}}
\def\Y{{\mathbb Y}}
\def\Z{{\mathbb Z}}

\def\ep{{\mathbf{e}}_p}
\def\em{{\mathbf{e}}_m}
\def\eq{{\mathbf{e}}_q}

\def\scr{\scriptstyle}
\def\\{\cr}
\def\({\left(}
\def\){\right)}
\def\[{\left[}
\def\]{\right]}
\def\<{\langle}
\def\>{\rangle}
\def\fl#1{\left\lfloor#1\right\rfloor}
\def\rf#1{\left\lceil#1\right\rceil}
\def\le{\leqslant}
\def\ge{\geqslant}
\def\eps{\varepsilon}
\def\mand{\qquad\mbox{and}\qquad}

\def\sssum{\mathop{\sum\ \sum\ \sum}}
\def\ssum{\mathop{\sum\, \sum}}
\def\ssumw{\mathop{\sum\qquad \sum}}

\def\vec#1{\mathbf{#1}}
\def\inv#1{\overline{#1}}
\def\num#1{\mathrm{num}(#1)}
\def\dist{\mathrm{dist}}

\def\fA{{\mathfrak A}}
\def\fB{{\mathfrak B}}
\def\fC{{\mathfrak C}}
\def\fU{{\mathfrak U}}
\def\fV{{\mathfrak V}}

\newcommand{\bflambda}{{\boldsymbol{\lambda}}}
\newcommand{\bfxi}{{\boldsymbol{\xi}}}
\newcommand{\bfrho}{{\boldsymbol{\rho}}}
\newcommand{\bfnu}{{\boldsymbol{\nu}}}

\def\GL{\mathrm{GL}}
\def\SL{\mathrm{SL}}

\def\Hba{\overline{\cH}_{a,m}}
\def\Hta{\widetilde{\cH}_{a,m}}
\def\Hb1{\overline{\cH}_{m}}
\def\Ht1{\widetilde{\cH}_{m}}

\def\flp#1{{\left\langle#1\right\rangle}_p}
\def\flm#1{{\left\langle#1\right\rangle}_m}
\def\dmod#1#2{\left\|#1\right\|_{#2}}
\def\dmodq#1{\left\|#1\right\|_q}

\def\Zm{\Z/m\Z}

\def\Err{{\mathbf{E}}}

\newcommand{\comm}[1]{\marginpar{%
\vskip-\baselineskip %raise the marginpar a bit
\raggedright\footnotesize
\itshape\hrule\smallskip#1\par\smallskip\hrule}}

\def\xxx{\vskip5pt\hrule\vskip5pt}

%%%%%%%%%%%%%%%%%%
%% PAPER BEGINS %%
%%%%%%%%%%%%%%%%%%

\title{A refinement of the Burgess bound for character sums}

\author[B. Kerr] {Bryce Kerr}

\address{Department of Pure Mathematics, University of New South Wales,
Sydney, NSW 2052, Australia}
\email{bryce.kerr89@gmail.com}

 \author[I. E. Shparlinski] {Igor E. Shparlinski}

\address{Department of Pure Mathematics, University of New South Wales, 
Sydney, NSW 2052, Australia}
\email{igor.shparlinski@unsw.edu.au}

% \author[I. D. Shkredov]{Ilya D. Shkredov}
%\address{Steklov Mathematical Institute of Russian Academy
%of Sciences, ul. Gubkina 8, Moscow, Russia, 119991, and Institute for Information Transmission Problems  of Russian Academy
%of Sciences, Bolshoy Karet\-ny Per. 19, Moscow, Russia, 127994,
%and MIPT, Institutskii per. 9, Dolgoprudnii, Russia, 141701}
%\email{ilya.shkredov@gmail.com}

\author[K. H. Yau]{Kam Hung Yau}

\address{Department of Pure Mathematics, University of New South Wales,
Sydney, NSW 2052, Australia}
\email{kamhung.yau@unsw.edu.au}
 
\date{\today}
\pagenumbering{arabic}

\keywords{Character sums, Burgess bound, numbers with no small prime factors}
\subjclass[2010]{11L40, 11N25}

 \begin{abstract} 
In this paper we give a refinement of the  bound of  D.~A.~Burgess for multiplicative character sums modulo a prime number $q$. 
This continues a series of  previous logarithmic improvements, which are mostly due to J.~B.~Friedlander,  H.~Iwaniec and E.~Kowalski. In particular, 
for any nontrivial multiplicative character $\chi$ modulo a prime $q$ and any integer $r\ge 2$, we show that 
$$ \sum_{M<n\le M+N}\chi(n) = O\( N^{1-1/r}q^{(r+1)/4r^2}(\log q)^{1/4r}\), 
$$
which sharpens previous results   by a factor $(\log q)^{1/4r}$.  
 Our improvement comes from  averaging over numbers with no small prime factors rather than over an interval
 as in previous approaches. 
 \end{abstract}

\maketitle
\section{Introduction}
Given a prime number $q$ and a multiplicative character $\chi$ modulo $q$, we consider bounding the sums
\begin{equation}
\label{eq:charsums}
\sum_{M<n\le M+N}\chi(n).
\end{equation}
 The first nontrivial result in this direction, which is   about a century old,  is due to  P{\'o}lya~\cite{Pol} 
 and Vinogradov~\cite{Vin}  and takes the form
\begin{equation}
\label{eq:polyavinogradov}
\sum_{M<n\le M+N}\chi(n) = O\(q^{1/2}\log{q}\) 
\end{equation}
with an absolute implied constant. 
Clearly the   bound~\eqref{eq:polyavinogradov} is nontrivial provided $N\ge q^{1/2} (\log q)^{1+\varepsilon}$ for any fixed $\varepsilon >0$. 

Several  logarithmic improvements of~\eqref{eq:polyavinogradov} have recently been obtained for special characters, see~\cite{FrGo,GoLa,LaMa}
and references therein.

For large values of $N,$ the Polya-Vinogradov bound~\eqref{eq:polyavinogradov}  is still the 
sharpest result known today although % in the special case that $M=0$, 
Montgomery and Vaughan~\cite{MV} have shown that assuming the truth of the Generalized Riemann Hypothesis we have
$$
\sum_{M<n\le M+N} \chi(n) = O\(q^{1/2}\log\log{q}\).
$$
The P{\'o}lya--Vinogradov bound~\eqref{eq:polyavinogradov} can be thought of as roughly saying that for large $N$, the sequence $\{\chi(n)\}_{n=M+1}^{M+N}$ behaves like a typical random sequence chosen uniformly from the image $\chi(\{1,\dots,q-1\})$. We  expect this to be true for smaller values of $N$ although this problem is much less understood. In the special case $M=0$, the Generalized Riemann Hypothesis (GRH) implies that
\begin{equation}
\label{eq:GRH}
\left|\sum_{0<n\le N}\chi(n)\right|\le N^{1/2}q^{o(1)},
\end{equation}
which is nontrivial provided $N\ge q^{\varepsilon}$ and is essentially optimal. Although the conditional bound~\eqref{eq:GRH} on the GRH is well-known, see for example~\cite[Section~1]{MV}; 
 it may not be easy to find a direct reference, however it can be easily derived from~\cite[Theorem~2]{GrSo1}.

We also note that Tao~\cite{Tao} has shown progress on the generalized Elliott-Halberstam conjecture allows one to bound short character sums in the case $M=0$.

For values of $N$ below the  P{\'o}lya--Vinogradov range, the sharpest unconditional bound for the sums~\eqref{eq:charsums} is due to Burgess~\cite{Bur1,Bur2}  and may be stated as follows. For any prime number $q$,  nontrivial multiplicative character $\chi$ modulo $q$ and integer $r\ge 1$ we have
\begin{equation}
\label{eq:Burg}
\sum_{M<n\le M+N}\chi(n) = O\( N^{1-1/r}q^{(r+1)/4r^2}\log q\),
\end{equation}
where the implied constant may depend on $r$, 
and is nontrivial provided $N\ge q^{1/4+\varepsilon}$ for any fixed $\varepsilon >0$.  This bound has remained the sharpest for short sums over the past fifty years although slight refinements have been
 made by improving the factor $\log{q}$. 
 For example, by~\cite[Equation~(12.58)]{IwKo} we have 
\begin{equation}
\label{eq:Iw1}
\sum_{M<n\le M+N}\chi(n) = O\(N^{1-1/r}q^{(r+1)/4r^2}(\log q)^{1/r}\),
\end{equation}
where the  implied constant  is absolute.
It is also announced in~\cite[Chapter~12, Remark, p.~329]{IwKo}, that one can actually obtain 
\begin{equation}
\label{eq:Iw2}
\sum_{M<n\le M+N}\chi(n) = O\(N^{1-1/r}q^{(r+1)/4r^2}(\log q)^{1/2r}\),
\end{equation}
provided $r\ge 2$, see also~\cite[Theorem~9.27]{MV}. 

We also remark that in the initial range, that is, for $M=0$, slight improvements of the bounds~\eqref{eq:Iw1}, \eqref{eq:Iw2} 
and also of  Theorem~\ref{thm:main1} below are given in~\cite{Ell,GrSo1,GrSo2,Hild}.
However these improvements do not imply any improvement of the above bound of Burgess~\cite{Bur0} on the smallest quadratic nonresidue, we also refer  
to~\cite{BobGold} for a discussion.  Finally, we recall that the best known bounds on the smallest quadratic nonresidue is $O(q^{1/4\sqrt{e}+o(1)})$, and on the gaps between quadratic nonresidues is $O(q^{1/4} \log q)$, are both due to Burgess~\cite{Bur0,Bur3}.  

We recall that both~\eqref{eq:Iw1} and~\eqref{eq:Iw2}  are based on a bound 
of Friedlander and  Iwaniec~\cite[Section~4]{FrIw}  on the number of solutions 
to the congruence~\eqref{eq:multcongeq} however with variables $u_1,u_2$ 
from the whole interval, without any arithmetic constraints. Imposing such constraints
is a new idea which underlines our approach.  Using this idea,  we give a further refinement of the Burgess bound~\eqref{eq:Burg} and thus contribute to the series of logarithmic improvements~\eqref{eq:Iw1} and~\eqref{eq:Iw2}. More specifically, we  improve~\eqref{eq:Iw2} by a factor $(\log{q})^{1/4r}$. We remark that Booker~\cite{Book} has previously used shifts by products $u_1 u_2$ 
where one of the variables is prime in the Friedlander-Iwaniec approach~\cite{FrIw} in order to obtain a numerically explicit Burgess bound. The benefit of prime shifts being simpler computations with estimating greatest common divisors.   
%Our argument follows previously established techniques which proceed by a certain averaging to reduce the problem to bounding bilinear forms. As we have mentioned, our improvement comes from averaging over numbers with no small prime factors rather than averaging over an entire interval which we give in Section~\ref{sec:cong}.
 Our argument can be considered as an elaboration of this idea and our improvement comes from averaging over numbers with no small prime factors rather than over an entire interval which we give in Section~\ref{sec:cong}.

\section{Main result}

Throughout the paper, the implied constants in 
the symbols `$O$' and `$\ll$' may occasionally, where obvious, depend on 
 the real parameter $A$ and 
are absolute otherwise (we recall that $U \ll V$  and $U = O(V)$  are  equivalent
to $|U|\le c V$ for some constant $c$).

Our  main result is as follows.

\begin{theorem}
\label{thm:main1}
Let $q$ be prime, $r\ge 2$, $M$ and $N$ integers with 
$$N\le q^{1/2+1/4r}.$$
For any nontrivial multiplicative character $\chi$  modulo $q$, we have
$$
\sum_{M<n\le M+N}\chi(n) \ll N^{1-1/r}q^{(r+1)/4r^2} (\log q)^{1/4r},
$$
where the implied constant
is absolute. 
\end{theorem}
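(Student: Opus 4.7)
The plan is to run the Burgess argument, but averaging the multiplicative shift $ab$ over a \emph{sifted} set of $b$'s rather than over an interval. Fix parameters $A,B$ with $AB\le N$ and a threshold $z=z(q)$ (a slowly growing function of $q$, to be optimized), and let
$$
\cB=\{b\in\N:\ b\le B,\ p\mid b\Longrightarrow p\ge z\}.
$$
Writing $\chi(n+ab)=\chi(b)\chi(n\bar b+a)\pmod q$ for each $(a,b)\in[1,A]\times\cB$ with $\gcd(b,q)=1$, translating $n$, and reparametrising by $m\equiv n\bar b\pmod q$, one obtains, up to an acceptable boundary error,
$$
A|\cB|\,\biggl|\sum_{M<n\le M+N}\chi(n)\biggr|\ll \sum_{m\in\Z/q\Z}\nu(m)\Bigl|\sum_{a=1}^{A}\chi(m+a)\Bigr|,
$$
where $\nu(m)$ counts the pairs $(n,b)\in(M,M+N]\times\cB$ with $n\equiv bm\pmod q$.

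A standard application of H\"older's inequality, combined with the Weil bound on the complete character sum $\sum_m|\sum_a\chi(m+a)|^{2r}$, now reduces the problem (as in~\cite[Chapter~12]{IwKo}) to estimating the second moment
$$
T=\sum_{m\in\Z/q\Z}\nu(m)^2,
$$
which is the number of solutions of the multiplicative congruence
$$
n_1 b_2\equiv n_2 b_1\pmod q,\qquad n_1,n_2\in(M,M+N],\ b_1,b_2\in\cB.
$$
In the classical treatment with $\cB=[1,B]\cap\N$, the Friedlander-Iwaniec bound~\cite[Section~4]{FrIw} on this count loses a factor of $\log q$, which is ultimately responsible for the $(\log q)^{1/2r}$ factor in~\eqref{eq:Iw2}.

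The key new input is that when $b_1,b_2\in\cB$, every common prime factor of $b_1$ and $b_2$ is at least $z$; this forces the associated divisor-type sums to be smaller by a factor of order $\log z$ compared to the unsifted case. Establishing this refined estimate on $T$, while simultaneously accounting for the sieve density $|\cB|\asymp B/\log z$, is the principal technical step and is carried out in Section~\ref{sec:cong}. Once it is in place, inserting the improved bound on $T$, optimising $A$ and $B$ along standard Burgess lines, and choosing $z$ as a suitably large power of $\log q$, yields a saving of $(\log q)^{1/2}$ inside the $2r$-th root, which is exactly the improvement of $(\log q)^{1/4r}$ claimed. The main obstacle is the refined congruence count itself: the sieve gain in the number of collisions $n_1b_2\equiv n_2b_1\pmod q$ has to be shown to dominate the density cost $|\cB|/B\asymp 1/\log z$ by precisely the right margin, a delicate balance since both effects are only logarithmic in $q$.
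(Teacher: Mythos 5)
Your high-level plan matches the paper's proof, up to relabeling: your $A, B, a, b, \cB, \nu(m)$ play the roles of the paper's $V, U, v, u, \cU_z(U), I(\lambda)$. Shift by products $ab$ with $a$ in an interval and $b$ running over integers free of prime factors below $z$, apply H\"older and the Weil bound, and reduce to a sieved variant of the Friedlander--Iwaniec congruence count. The key estimate the paper proves (Lemma~\ref{lem:multcong}) reads, in your notation,
$$
T \ll N|\cB|\left(1 + \frac{\log B}{(\log z)^2}\right),
$$
and after dividing through by $|\cB|^2$ and using the density $|\cB| \asymp B/\log z$, the quantity replacing the unsifted factor $\log B$ inside the $2r$-th root is
$$
\log z\left(1 + \frac{\log B}{(\log z)^2}\right).
$$
Balancing the two terms forces $\log z \asymp (\log B)^{1/2}$, i.e.\ $z = \exp\bigl((\log B)^{1/2}\bigr)$, exactly the choice the paper makes in~\eqref{eq:zdef}; the saving is then $(\log B)^{1/2} \asymp (\log q)^{1/2}$, which after the $2r$-th root gives the claimed $(\log q)^{1/4r}$.

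Your proposal, however, states that the optimum is reached by ``choosing $z$ as a suitably large power of $\log q$'', and that is wrong. With $z = (\log q)^{C}$ we have $\log z \asymp \log\log q$, so the controlling factor becomes $\asymp \log q/(\log\log q)^2$, yielding only a $(\log\log q)^2$ gain over the unsifted $\log B \asymp \log q$ rather than the $(\log q)^{1/2}$ gain needed. The correct threshold $\exp\bigl((\log B)^{1/2}\bigr)$ eventually exceeds every fixed power of $\log q$, so this is not a small discrepancy but a parameter regime in which your argument would fall short of Theorem~\ref{thm:main1} and only recover something marginally better than~\eqref{eq:Iw2}. Your heuristic for why a sieve helps (gain in the collision count versus density cost $1/\log z$) is correct, but the balance point is misidentified. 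Separately, your outline asserts rather than proves the sieved second-moment bound; establishing it is the paper's sole new technical content and requires splitting the sum over $d=(u_1,u_2)$ into ranges according to whether $z$ is small or large relative to $(u_2/d)^{A}$, a dyadic decomposition in $r$ with $z^{r-1}\le u_2/d< z^r$, and repeated applications of a Fundamental-Lemma type estimate (Lemma~\ref{lem:sieveboud}), none of which appears in your sketch.
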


\section{Preliminary results}

First we recall the following bound  which is contained in~\cite[Theorem~1.2]{Trev} which is well-known (with slightly weaker constants).  
%from~\cite[Theorem~1.2{IwKo}.
%
%, see for example~\cite[Lemma~12.8]{IwKo}. 
\begin{lemma}
\label{lem:weil}
Let $q$ be prime and $\chi$ a nontrivial multiplicative character  modulo $q$. Then we have
$$\sum_{\lambda=1}^{q}\left|\sum_{1\le v \le V}\chi(\lambda+v) \right|^{2r}\le (2r)^{r}V^{r}q+2rV^{2r}q^{1/2}.$$
\end{lemma}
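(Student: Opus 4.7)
The plan is to prove this as a standard $2r$-th moment bound derived from the Weil estimate for multiplicative character sums on polynomial arguments. I would begin by expanding the $2r$-th power. Writing $|z|^{2r} = z^r \bar z^r$, we get
$$
\sum_{\lambda=1}^q\left|\sum_{1\le v\le V}\chi(\lambda+v)\right|^{2r}
= \sum_{\substack{v_1,\ldots,v_r\\ w_1,\ldots,w_r}} T(\vec v,\vec w),
$$
where $\vec v,\vec w\in\{1,\ldots,V\}^r$ and
$$
T(\vec v,\vec w)=\sum_{\lambda=1}^{q}\chi\!\left(\prod_{i=1}^r(\lambda+v_i)\right)\overline{\chi}\!\left(\prod_{j=1}^r(\lambda+w_j)\right).
$$
Using $\overline{\chi}=\chi^{d-1}$ for $d=\mathrm{ord}(\chi)$, the inner quantity becomes a multiplicative character sum of a polynomial $F_{\vec v,\vec w}(\lambda)=\prod_i(\lambda+v_i)\prod_j(\lambda+w_j)^{d-1}$, whose support of roots lies in the set $\{-v_1,\ldots,-v_r,-w_1,\ldots,-w_r\}$ of cardinality at most $2r$.

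Next I would split the outer sum into a \emph{diagonal} part and an \emph{off-diagonal} part. On the diagonal, defined as those $(\vec v,\vec w)$ for which $\{v_i\}=\{w_j\}$ as multisets, the polynomial $F$ is a $d$-th power (modulo cancellation of common factors), so $\chi(F(\lambda))\in\{0,1\}$ and $|T(\vec v,\vec w)|\le q$. The number of such diagonal tuples is at most $r! V^r\le (2r)^r V^r$, since $\vec v$ may be arbitrary and $\vec w$ must be one of its $r!$ permutations; this yields the first term $(2r)^r V^r q$.

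For the off-diagonal contribution I would apply the Weil bound to $\sum_\lambda \chi(F(\lambda))$: whenever $F$ is not a perfect $d$-th power in $\overline{\F_q}(X)$, the sum is bounded by $(2r-1)q^{1/2}$ because $F$ has at most $2r$ distinct roots. There are at most $V^{2r}$ tuples in total, so this contributes at most $(2r-1)V^{2r}q^{1/2}\le 2rV^{2r}q^{1/2}$, giving the second term. The minor technical obstacle is handling the borderline case in which $\{v_i\}\ne\{w_j\}$ as multisets yet $F$ is still a $d$-th power (requiring all surviving multiplicities to be divisible by $d$); such tuples can be absorbed into the diagonal count, which is why the constant $(2r)^r$ is taken larger than the naive $r!$.

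The expected hardest (or at least the most delicate) step is the invocation of Weil: one must ensure the polynomial $F_{\vec v,\vec w}$ is genuinely not a $d$-th power in the off-diagonal regime so that the square-root cancellation is available, and one must account for the pole of the rational function $\prod(\lambda+v_i)/\prod(\lambda+w_j)$ at the roots of the denominator, which contributes at most $O(r)$ extra terms that are safely inside the stated constants. Combining the diagonal bound $r!V^rq\le (2r)^rV^rq$ with the off-diagonal bound $2rV^{2r}q^{1/2}$ then yields the claimed inequality.
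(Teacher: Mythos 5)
The paper does not prove this lemma: it cites Trevi\~no's Theorem~1.2 (reference \cite{Trev}) for the explicit constants, so there is no internal proof to compare against. Your proof sketch follows the standard Burgess--Weil strategy (expand the $2r$-th power, isolate a ``diagonal'' on which the complete sum is trivially $\le q$, apply the Weil bound to the remaining complete character sums), which is indeed the right route and matches the argument behind Trevi\~no's bound.

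However, there is a genuine gap exactly at the point you yourself flag as the most delicate. When $d=\mathrm{ord}(\chi)\le r$, the set of tuples on which $F_{\vec v,\vec w}$ is a perfect $d$-th power is strictly larger than the multiset diagonal $\{v_i\}=\{w_j\}$, and it is not a small perturbation. The correct characterization is: $F_{\vec v,\vec w}$ is a $d$-th power if and only if $\alpha_a\equiv\beta_a\pmod d$ for every $a$ (where $\alpha_a,\beta_a$ are the multiplicities of $a$ among the $v_i$ and $w_j$); since $d\ge 2$, this forces every value occurring in the combined $2r$-tuple $(v_1,\dots,v_r,w_1,\dots,w_r)$ to occur at least twice, hence at most $r$ distinct values appear. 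One must therefore bound the number of maps $\{1,\dots,2r\}\to\{1,\dots,V\}$ with no fibre of size exactly one, which is controlled by the number of set partitions of a $2r$-element set into blocks of size $\ge 2$. Showing that this partition count is at most $(2r)^r$ (so that the bad tuples contribute at most $(2r)^rV^rq$) is a real combinatorial lemma; your remark that the borderline cases ``can be absorbed into the diagonal count, which is why the constant $(2r)^r$ is taken larger than the naive $r!$'' assumes precisely what needs to be proved. A naive count (assign each of the $2r$ positions one of $\le r$ labels) only yields $r^{2r}V^r$, which is worse than $(2r)^rV^r$ for $r\ge 2$, so getting the stated constant requires an actual argument rather than an appeal to slack in $(2r)^r$ versus $r!$. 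The rest of the sketch --- the reduction of $T(\vec v,\vec w)$ to $\sum_\lambda\chi\bigl(F_{\vec v,\vec w}(\lambda)\bigr)$, the Weil bound $(2r-1)q^{1/2}$ for non-$d$-th-power $F$ with at most $2r$ distinct roots, and the trivial bound $V^{2r}$ on the number of off-diagonal tuples --- is fine.
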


For any positive real numbers $w$ and $z$ we denote
$$
V(w) = \prod_{p < w}\left(1-\frac{1}{p}\right),
$$
and
\begin{equation}
\label{eq:Pz}
P(z) = \prod_{p <z} p.
\end{equation}
It follows from  Mertens formula (see~\cite[Equation~(2.16)]{IwKo}) that
\begin{equation}
\label{eq:Mert}
\frac{1}{\log{w}}\ll V(w) \ll \frac{1}{\log w}. 
\end{equation}

As usual, we use $(u,v)$ to denote the greatest common divisor of two integers $u$ and $v$. 

For real $U$ and $z$, we define the set $\cU_z(U)$ by 
\begin{equation}
\label{eq:Uz}
\cU_z(U)=\{ 1\le u \le U ~:~(u,P(z))=1  \},
\end{equation}
where $(a,b)$ denotes the greatest common divisor of integers $a$ and $b$.

The following result follows from combining~\cite[Theorem~4.1]{DHG} with arguments from the proof of~\cite[Lemma~4.3]{DHG}. We also refer the reader to~\cite[Equation~(6.104)]{FI}.

\begin{lemma} \label{lem:Ucard} Let $C$ be sufficiently large and suppose that  
\begin{equation}
\label{eq:zUin}
z^C \le U.
\end{equation}
 Then for the cardinality of $\cU_z(U)$,  we have 
$$
\frac{U}{\log z}\ll |\cU_z(U)|\ll \frac{U}{\log z}.
$$
\end{lemma}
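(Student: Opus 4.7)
\medskip

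\textbf{Proof plan for Lemma~\ref{lem:Ucard}.} The quantity $|\cU_z(U)|$ is the sifting function $S(\cA,\cP,z)$ for $\cA=\{1,2,\dots,\lfloor U\rfloor\}$ and $\cP=\{p\ \text{prime}:p<z\}$, so the statement is a direct application of the fundamental lemma of the sieve. The plan is to invoke this in the form given by~\cite[Theorem~4.1]{DHG}, deduce the asymptotic
$$|\cU_z(U)|=U\,V(z)\(1+O(e^{-s})\)+O(R),$$
where $s=(\log D)/\log z$ is the sifting level parameter and $R$ is the accumulated remainder over divisors of $P(z)$ up to the level $D$, and then choose $D$ as a small power of $U$ so that both error terms become a small proportion of the main term $U\,V(z)$.

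First I would verify the hypotheses: for $\cA$ an interval, one has the congruence sum $|\{n\le U:d\mid n\}|=U/d+O(1)$ for squarefree $d$, so the individual remainder is $O(1)$. Choosing $D=U^{1/2}$, the accumulated remainder satisfies $R\le\sum_{d\le D}1\le U^{1/2}$. With $z^C\le U$, one has $\log D/\log z\ge C/2$, hence $e^{-s}\le e^{-C/2}$. For $C$ sufficiently large this makes the relative error $e^{-s}$ much smaller than $1/2$, and it also keeps $R$ much smaller than $U\,V(z)\asymp U/\log z$ (since $U^{1/2}\ll U/\log z$ comfortably). This yields
$$\tfrac12 U\,V(z)\le|\cU_z(U)|\le \tfrac32 U\,V(z),$$
and invoking Mertens' formula~\eqref{eq:Mert} converts $V(z)$ into $1/\log z$, giving the asserted two-sided bound.

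The only delicate point is calibrating constants so that the fundamental lemma actually produces bounds of the same order as the main term rather than only an asymptotic with a weaker error. This is why the hypothesis $z^C\le U$ is needed with \emph{some} sufficiently large absolute $C$: it creates enough room between the sifting parameter $z$ and the length of the interval $U$ so that the truncation error $e^{-s}$ can be driven below a fixed fraction (say $1/2$) and simultaneously the remainder $R$ is negligible. The upper bound direction alone, incidentally, does not need $z^C\le U$ and follows already from Brun's pure upper-bound sieve together with~\eqref{eq:Mert}; the lower bound is the place where the lower bound form of the fundamental lemma, and hence the condition~\eqref{eq:zUin}, genuinely enters.
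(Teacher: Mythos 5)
Your proposal is correct and takes essentially the same route as the paper: both invoke the fundamental lemma of the sieve in the form of \cite[Theorem~4.1]{DHG} for $\cA=\{1,\dots,\lfloor U\rfloor\}$, choose the level of distribution to be a small power of $U$ (the paper takes $z^{2v}\approx U^{1-\varepsilon}$, you take $D=U^{1/2}$), use $z^C\le U$ to drive the truncation error below $1/2$, and finish with Mertens. The one small gloss is your bound $R\le\sum_{d\le D}1\le U^{1/2}$: the remainder sum in \cite[Theorem~4.1]{DHG} carries a factor $3^{\nu(n)}$, so one gets $R\ll\sum_{n\le D}3^{\nu(n)}\ll D(\log D)^2$ rather than $D$ (the paper bounds it by $z^{2v}V(z)^{-3}\ll U^{1-\varepsilon}(\log z)^3$), but this is still negligible compared with $U/\log z$, so the conclusion is unaffected.
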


\begin{proof}
Let 
$$\cA=\{1,\dots,U\},$$
so that with notation as in~\cite[Theorem~4.1]{DHG} we have
$$|\cU_z(U)|=S(\cA,\cP,z),$$
and hence by~\cite[Theorem~4.1]{DHG}, for any $v\ge 1$ we have 
\begin{align*}
|\cU_z(U)|=UV(z)&\left(1+O\left(\exp(-v\log{v}-3v/2\right) \right)\\
& \qquad \qquad \qquad +O\left(\sum_{\substack{n<z^{2v} \\ n|P(z)}}3^{\nu(n)}|r_\cA(n)| \right).
\end{align*}
Considering the last term on the right
$$
\sum_{\substack{n<z^{2v} \\ n|P(z)}}3^{\nu(n)}|r_\cA(n)|\ll \sum_{\substack{n<z^{2v} \\ n|P(z)}}3^{\nu(n)}\le z^{2v}\sum_{\substack{ n|P(z)}}\frac{3^{\nu(n)}}{n}\le z^{2v}\prod_{p<z}\left(1+\frac{3}{p} \right),
$$
and since
$$\prod_{p<z}\left(1+\frac{3}{p} \right)\le \prod_{p<z}\left(1-\frac{1}{p} \right)^{-3}=V(z)^{-3},$$
we obtain
\begin{align*}
\sum_{\substack{n<z^{2v} \\ n|P(z)}}3^{\nu(n)}|r_\cA(n)|\ll z^{2v}V(z)^{-3},
\end{align*}
which implies that 
\begin{equation}
\label{eq:Uin1}
|\cU_z(U)|=UV(z)\left(1+ \mathrm{ET}_1 \right) +  \mathrm{ET}_2.
 \end{equation}
 with the error terms 
$$
 \mathrm{ET}_1 = O\(\exp\(-v\log{v}-3v/2\)\) \mand 
  \mathrm{ET}_2 = O\(z^{2v}V(z)^{-3}\).
$$
Let $\varepsilon$ be sufficiently small and take
$$v = \frac{(1-\varepsilon)}{2}\frac{ \log U}{\log z}.$$
Then we have  
\begin{equation}
\label{eq:Uin2}
  \mathrm{ET}_2 \ll  U^{1-\varepsilon}(\log z)^{3},
\end{equation}
and by~\eqref{eq:zUin} we may choose $C$ such that   \begin{equation}
\label{eq:Uin3}
  \mathrm{ET}_1 \le \frac{1}{2}.
\end{equation}

Combining~\eqref{eq:Uin1} with~\eqref{eq:Uin2} and~\eqref{eq:Uin3}, we derive 
$$UV(z)\ll |\cU_z(U)|\ll UV(z),$$
and the result follows from the Mertens estimate~\eqref{eq:Mert}.
\end{proof}

We recall a simplified form of~\cite[Lemma~4.4]{DHG}.

\begin{lemma} \label{lem:sieveboud}
For any integers $t$, $z,$   any real $U \ge 1$ and any positive constant $0 < A <1/2$, we have
$$
\sum_{\substack{u \in \cU_z(U) \\ t\mid  u}} 1 \ll  Ut^{-1}V(z),
$$
if $z < (Ut^{-1})^{A}$ and
$$
\sum_{\substack{u \in \cU_z(U) \\ t\mid  u}} 1  \ll  Ut^{-1}V(Ut^{-1}), 
$$
if $(Ut^{-1})^{A} \le z$.
\end{lemma}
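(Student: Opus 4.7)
The plan is to first reduce the divisibility-restricted sum to a single count of the form $|\cU_z(U/t)|$, and then invoke Lemma~\ref{lem:Ucard} in each of the two regimes. If $t$ shares a prime factor with $P(z)$, then no multiple of $t$ can be coprime to $P(z)$, so the sum is empty and both bounds hold trivially. Otherwise $(t,P(z))=1$, and writing $u = tv$ shows that the conditions $u \in \cU_z(U)$ and $t\mid u$ together are equivalent to $1 \le v \le U/t$ with $(v,P(z))=1$. Hence
$$
\sum_{\substack{u \in \cU_z(U)\\ t\mid u}} 1 = |\cU_z(U/t)|.
$$

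For the first case $z < (U/t)^A$, I would apply Lemma~\ref{lem:Ucard} directly with $U$ replaced by $U/t$. The hypothesis~\eqref{eq:zUin} is satisfied, since $z^C < (U/t)^{AC} \le U/t$ once $A$ lies below the reciprocal of the ``sufficiently large'' constant $C$ from Lemma~\ref{lem:Ucard} (which is consistent with the constraint $A<1/2$). The lemma then gives $|\cU_z(U/t)| \ll (U/t)/\log z$, and~\eqref{eq:Mert} rewrites the right-hand side as $(U/t)V(z)$.

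For the second case $(U/t)^A \le z$, the hypothesis $z^C \le U/t$ may fail at $z$ itself, so I would exploit the monotonicity $\cU_z(U/t) \subseteq \cU_{z'}(U/t)$ valid for $z' \le z$ (a larger cutoff imposes a stronger coprimality condition). Picking $z' = (U/t)^{A_0}$ for a small enough fixed $A_0>0$, Lemma~\ref{lem:Ucard} applies to $\cU_{z'}(U/t)$ and yields
$$
|\cU_{z'}(U/t)| \ll \frac{U/t}{\log z'} = \frac{U/t}{A_0 \log(U/t)} \ll_{A_0} (U/t)V(U/t),
$$
again by~\eqref{eq:Mert}. The only subtle point throughout is the book-keeping between the constants $A$, $A_0$, and the threshold constant in Lemma~\ref{lem:Ucard}; once the substitution $u = tv$ is made, both cases reduce to a single invocation of that lemma with a cutoff chosen appropriately for the regime.
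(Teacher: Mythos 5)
Your reduction --- dispose of $(t,P(z))>1$ trivially, then substitute $u=tv$ to rewrite the divisibility-restricted sum as $|\cU_z(U/t)|$ --- is correct, and the strategy of deducing both bounds from Lemma~\ref{lem:Ucard} combined with the monotonicity $\cU_z(X)\subseteq\cU_{z'}(X)$ for $z'\le z$ is sound. The paper itself does not prove Lemma~\ref{lem:sieveboud}; it simply cites~\cite[Lemma~4.4]{DHG}, so your route through Lemma~\ref{lem:Ucard} is a genuine and more self-contained alternative that stays within the paper's own toolkit.

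There is, however, a real gap in your first case. You apply Lemma~\ref{lem:Ucard} directly to $\cU_z(U/t)$, justifying hypothesis~\eqref{eq:zUin} by $z^C<(U/t)^{AC}\le U/t$, but this requires $AC\le 1$, i.e.\ $A\le 1/C$. Since $C$ is a ``sufficiently large'' absolute constant, $1/C$ is in general far smaller than $1/2$, so the argument as written does not cover the full range $0<A<1/2$ stipulated in the lemma; the parenthetical ``which is consistent with the constraint $A<1/2$'' papers over rather than resolves this. (Your restricted version does suffice for the way the lemma is later used, where $A$ is taken sufficiently small, but as a proof of the stated lemma it is incomplete.)

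The repair is exactly the monotonicity device you already use in the second case, applied uniformly. Set $z'=\min\{z,(U/t)^{1/C}\}$; then $z'\le z$, so $\cU_z(U/t)\subseteq\cU_{z'}(U/t)$, and $(z')^C\le U/t$, so Lemma~\ref{lem:Ucard} gives $|\cU_{z'}(U/t)|\ll (U/t)/\log z'$. If $z'=z$ this is already the target bound. If $z'=(U/t)^{1/C}<z$, then $\log z'=(1/C)\log(U/t)$, while the case hypothesis $z<(U/t)^A$ forces $\log z<A\log(U/t)$, so $\log z'\ge (1/(CA))\log z\gg_A\log z$; in either event $|\cU_z(U/t)|\ll_A (U/t)/\log z\ll (U/t)V(z)$ by~\eqref{eq:Mert}, with the $A$-dependence of the implied constant permitted by the paper's stated conventions. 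Your second case is fine as written, since $z\ge(U/t)^A$ guarantees $z'=(U/t)^{A_0}\le z$ once $A_0\le\min\{A,1/C\}$, though you should also note the trivial edge case $U/t\le 1$ separately before taking logarithms.
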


Note that Lemma~\ref{lem:sieveboud} is nontrivial only if $(t,P(z))=1$.

\section{Congruences with numbers with no small prime divisors}
\label{sec:cong}

The  new ingredient underlying our argument is the following:

\begin{lemma}
\label{lem:multcong}
Let $q$ be  prime and $z,M$, $N$ and $U$ integers with 
$$
U\le N, \quad UN\le q.
$$
Fix a sufficiently small positive real number $0 < A <1/2$ and suppose $z$ satisfies
\begin{equation}
\label{eq:zUcond}
1 < z \le U^{A}.
\end{equation}
Let $P(z)$ and   $\cU_z(U)$  be given by~\eqref{eq:Pz}  and by~\eqref{eq:Uz}, respectively, 
and let $I(z,M,N,U)$ count the number of solutions to the congruence
\begin{equation}
\label{eq:multcongeq}
n_1u_1\equiv n_2u_2 \pmod{q},
\end{equation}
with integral variables satisfying
$$
M<n_1,n_2\le M+N \mand  u_1,u_2 \in \cU_z(U).
$$
Then we have 
$$
I(z,M,N,U) \ll  N|\cU_z(U)| \(1 + \frac{\log U}{(\log  z)^2} \).
$$

\end{lemma}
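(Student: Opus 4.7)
The plan is to separate the contribution to $I(z,M,N,U)$ according to whether $u_1 = u_2$ or $u_1 \ne u_2$. In the first case the congruence~\eqref{eq:multcongeq} reduces to $n_1 \equiv n_2 \pmod q$, and since $N \le UN \le q$ the interval $(M,M+N]$ contains at most one representative of each residue class modulo $q$, so $n_1 = n_2$ is forced. This case contributes exactly $N|\cU_z(U)|$.

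For each pair $(u_1,u_2) \in \cU_z(U)^2$ with $u_1 \ne u_2$, I would write $d = \gcd(u_1,u_2)$ and $v_i = u_i/d$, so that $\gcd(v_1,v_2) = 1$ and $(v_1,v_2) \ne (1,1)$. Any solution satisfies $n_1 u_1 - n_2 u_2 = kq$ for some $k \in \Z$. The crucial observation, using the hypothesis $NU \le q$, is that as $(n_1,n_2)$ ranges over $(M,M+N]^2$ the difference $n_1 u_1 - n_2 u_2$ lies in an interval of length $N(u_1+u_2) \le 2NU \le 2q$; notably this is independent of the size of $M$. Hence $k$ takes at most three integer values. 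Moreover $d \mid kq$ together with $\gcd(d,q) = 1$ forces $d \mid k$, and for each admissible $k$ the general integer solution of the coprime equation $n_1 v_1 - n_2 v_2 = (k/d)q$ has the form $(n_1,n_2) = (n_1^0 + v_2 t,\, n_2^0 + v_1 t)$; the constraint $n_i \in (M,M+N]$ permits at most $N/\max(v_1,v_2) + 1$ values of $t$. Consequently each such $(u_1,u_2)$ contributes $O\bigl(N/\max(v_1,v_2) + 1\bigr)$ to the count.

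The remaining task is to estimate
\[
\sum_{\substack{u_1,u_2 \in \cU_z(U) \\ u_1 \ne u_2}} \Bigl(\frac{N}{\max(v_1,v_2)} + 1\Bigr),
\]
which I would parametrise by $(d,v_1,v_2)$ subject to $(d,P(z)) = (v_1 v_2, P(z)) = 1$, $\gcd(v_1,v_2) = 1$, $dv_i \le U$, and $(v_1,v_2) \ne (1,1)$. By symmetry I would fold the sum onto $V = \max(v_1,v_2) \ge 2$; the number of admissible $d$ equals $|\cU_z(U/V)|$ and the number of admissible values of $\min(v_1,v_2)$ is at most $|\cU_z(V)|$. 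Applying Lemma~\ref{lem:Ucard} to both (with the trivial bound $|\cU_z(X)| \le X$ reserved for $X$ below $z^C$), together with the partial-summation estimate $\sum_{V \le U,\,(V,P(z))=1} 1/V \ll (\log U)/\log z$, derived from Lemma~\ref{lem:Ucard} and the Mertens bound~\eqref{eq:Mert}, yields the main contribution $NU(\log U)/(\log z)^3 \asymp N|\cU_z(U)| \cdot (\log U)/(\log z)^2$. Secondary terms of sizes $NU/\log z$ and $U^2/(\log z)^2$ are absorbed into the $N|\cU_z(U)|$ part of the claimed bound using the hypothesis $U \le N$.

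The principal technical obstacle is the careful bookkeeping in the boundary regime where $V$ is close to $U$, so that $U/V$ drops below $z^C$ and Lemma~\ref{lem:Ucard} no longer applies directly; one must verify that switching to the trivial bound in this range still allows absorption into the main term. Arranging three independent savings of $1/\log z$ -- one from the sum over $d$, one from the sum over $\min(v_1,v_2)$, and one from the sieved Mertens-type sum over $V$ -- is the heart of the calculation that produces the correct $(\log z)^{-3}$ factor in the leading term.
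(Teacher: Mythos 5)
Your proposal follows essentially the same route as the paper: split off the diagonal $u_1 = u_2$, which contributes exactly $N|\cU_z(U)|$; for $u_1 \ne u_2$ use $UN \le q$ to pass from the congruence~\eqref{eq:multcongeq} to an equation over $\Z$ with the auxiliary multiplier $k$ pinned down up to $O(1)$ choices; bound the number of $(n_1,n_2)$ by $O\bigl(N(u_1,u_2)/\max(u_1,u_2)\bigr)$ (the $+1$ being absorbed via $U \le N$); and then estimate the resulting gcd-weighted double sum with the sieve bounds. The paper reduces to $\sum_{u_1,u_2}(u_1,u_2)/u_2$, collects by $d=(u_1,u_2)$, and splits into $\Sigma_1,\Sigma_{21},\Sigma_{22}$ according to the size of $u_2/d$ relative to $z$, invoking Lemma~\ref{lem:sieveboud} throughout. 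Your parametrisation by $(d,\,V=\max(v_1,v_2),\,\min(v_1,v_2))$ with the cardinality bound from Lemma~\ref{lem:Ucard} used twice is a cleaner bookkeeping of the same case analysis, and does arrive at the same leading term $NU(\log U)/(\log z)^3$ plus absorbable remainders.

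One caveat is worth fixing. You cite the estimate $\sum_{V \le U,\,(V,P(z))=1} 1/V \ll (\log U)/\log z$ as a consequence of Lemma~\ref{lem:Ucard} and~\eqref{eq:Mert}, and elsewhere propose falling back on the trivial bound $|\cU_z(X)| \le X$ when $X < z^C$. Those two positions are in tension: Lemma~\ref{lem:Ucard} is silent for $t < z^C$, and partial summation with only the trivial bound in that range yields $O(\log z + (\log U)/\log z)$, where the extra $\log z$ can dominate when $(\log z)^2 \gg \log U$ (which is permitted under $z \le U^A$; the Lemma does not assume the specific choice $z=\exp((\log U)^{1/2})$ made later in the proof of Theorem~\ref{thm:main1}). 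Worse, in the regime $V < z^C$, using $|\cU_z(V)|\le V$ together with $|\cU_z(U/V)|\ll (U/V)/\log z$ produces a contribution of order $U$ rather than $U/\log z$, which would overshoot the target. The fix is the observation that every $V\ge 2$ with $(V,P(z))=1$ has all prime factors $\ge z$, hence $V\ge z$ and $|\cU_z(t)|\ll_C t/\log z$ already for $z\le t< z^C$ (by counting $z$-rough numbers, or equivalently by the second case of Lemma~\ref{lem:sieveboud} with $t=1$); with this refinement both the sieved harmonic sum and the boundary regimes $V< z^C$, $V>U/z^C$ contribute $O(U/\log z)$ and the argument closes. So the plan is correct, but the replacement for Lemma~\ref{lem:Ucard} in the small-ratio ranges must be Lemma~\ref{lem:sieveboud} (or the smallest-prime-factor argument), not the bare trivial bound.
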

\begin{proof}
For each pair of integers $u_1$ and $u_2$, we let $J(u_1,u_2)$ count the number of solutions to the congruence~\eqref{eq:multcongeq} in variables $n_1,n_2$ satisfying
$$M<n_1,n_2\le M+N,$$
so that
\begin{align*}
I(z,M,N,U)&=\sum_{\substack{u_1,u_2 \in \cU_z(U)}}J(u_1,u_2) \\
                &=\sum_{\substack{u_1 \in \cU_z(U)}}J(u_1,u_1)+2\sum_{ \substack{u_1,u_2 \in \cU_z(U) \\ u_1 < u_2 }}J(u_1,u_2).
\end{align*}
Since 
$$J(u_1,u_1)=N,$$
we have 
$$
I(z,M,N,U)=N\sum_{\substack{u_1 \in \cU_z(U)}}1+
2\sum_{ \substack{u_1,u_2 \in \cU_z(U) \\ u_1 < u_2  }}J(u_1,u_2).
$$
Using  Lemma~\ref{lem:sieveboud} (with $t=1$), the bound~\eqref{eq:Mert} and recalling~\eqref{eq:zUcond} we see that  
$$
\sum_{\substack{u_1 \in \cU_z(U) }}1 \ll \frac{U}{\log z}, 
$$ 
and hence 
\begin{equation}
\label{eq:s1}
I(z,M,N,U) \ll \frac{NU}{\log z}+\sum_{ \substack{u_1,u_2 \in \cU_z(U) \\ u_1 < u_2 }}J(u_1,u_2).
\end{equation}

Fix some pair $u_1,u_2$ with $u_1<u_2$ and consider $J(u_1,u_2)$. We first note that $J(u_1,u_2)$ is bounded by the number of solutions to the equation
\begin{equation}
\label{eq:red1}
u_1(M+n_1)-u_2(M+n_2)=kq,
\end{equation}
with variables $n_1,n_2,k$ satisfying 
$$1\le n_1,n_2\le N, \quad k\in \Z.$$
Since
$$|kq-(u_1-u_2)M|\le UN<q,$$
there exists at most one value $k$ satisfying~\eqref{eq:red1} and hence $J(u_1,u_2)$ is bounded by the number of solutions to the equation~\eqref{eq:red1} with variables satisfying
$$1\le n_1,n_2 \le N.$$
Since we may suppose $J(u_1,u_2)\ge 1$, fixing one solution $n_1^{*},n_2^{*}$ to~\eqref{eq:red1}, for any other solution $n_1,n_2$ we have 
$$u_1(n_1-n_1^{*})=u_2(n_2-n_2^{*}).$$
The above equation determines the residue of $n_1$ modulo $u_2/(u_1,u_2)$ and for each value of $n_1$ there exists at most one solution $n_2$. Since $U\le N$ this implies that 
$$J(u_1,u_2)\ll N\frac{(u_1,u_2)}{u_2},$$
and hence by~\eqref{eq:s1},  we derive  
\begin{equation}
\label{eq:sums1}
 I(z,M,N,U) \ll \frac{NU}{\log z} +N\sum_{u_1,u_2 \in \cU_z(U)}\frac{(u_1,u_2)}{u_2} .
 \end{equation}

Considering the last sum on the right hand side and collecting together $u_1$ and $u_2$ with the same value 
$(u_1,u_2) = d$,  we have 
\begin{equation}
\begin{split}
\label{eq:sums2}
\sum_{u_1,u_2 \in \cU_z(U)}\frac{(u_1,u_2)}{u_2} 
& \le \sum_{d \in \cU_z(U)}d\sum_{\substack{ u_2\in  \cU_z(U) \\ d \mid u_2}}
 \frac{1}{u_2}\sum_{\substack{ u_1 \in \cU_z(u_2)\\ d \mid u_1}}1 \\
 & =   \Sigma_{1} + \Sigma_{2},
\end{split}
\end{equation}
where
$$
\Sigma_{1} = \sum_{d \in \cU_z(U)}d\sum_{\substack{ u_2\in  \cU_z(U) \\ d \mid u_2 \\ z \le (u_{2}/d)^{A} }}
 \frac{1}{u_2}\sum_{\substack{ u_1 \in \cU_z(u_2)\\ d \mid u_1}}1,
$$
and
$$
\Sigma_{2} = \sum_{d \in \cU_z(U)}d\sum_{\substack{ u_2\in  \cU_z(U) \\ d \mid u_2 \\ z > (u_{2}/d)^{A} }}
 \frac{1}{u_2}\sum_{\substack{ u_1 \in \cU_z(u_2)\\ d \mid u_1}}1. 
$$

Considering $\Sigma_1$, by Lemma~\ref{lem:sieveboud} and the condition $z \le (u_{2}/d)^{A}$ we bound
\begin{align*}
\Sigma_{1} & \ll  \sum_{d \in \cU_z(U)} \sum_{\substack{u_2\in  \cU_z(U) \\ d \mid u_2 \\ z \le  (u_{2}/d)^{A} }} V(z) \ll  V(z) \sum_{\substack{d \in \cU_z(U) }} \sum_{\substack{ u_2\in  \cU_z(U) \\ d \mid u_2 \\ z \le  (u_{2}/d)^{A} }}  1.
\end{align*}
The condition $z \le (u_2/d)^{A}$ in the innermost summation implies that  the outer summation over $d$ is non empty only if $z \le (U/d)^{A}$ and hence by  Lemma~\ref{lem:sieveboud} we have
\begin{equation} \label{eq:Sigma111}
\Sigma_{1} \ll V(z) \sum_{\substack{d \in \cU_z(U) \\ z \le  (U/d)^{A} }} \sum_{\substack{u_2\in  \cU_z(U) \\ d \mid u_2 }}  1 \ll  U V(z)^{2}\sum_{\substack{d \in \cU_z(U)  }} d^{-1}.
\end{equation} 
Let
$$
S(t)=\sum_{d \in \cU_z(t)}  1.
$$
Hence applying partial summation and Lemma~\ref{lem:sieveboud}, we obtain
\begin{equation}
\label{eq:sum d}
\begin{split}
\sum_{d \in \cU_z(U)} d^{-1}&  = \frac{S(U)}{U}+ \int_{1}^{U} \frac{S(t)}{t^2}dt \\
& \ll V(z) + \int_{1}^{z^{1/A}} \frac{S(t)}{t^2}dt +\int_{z^{1/A} }^{U}\frac{S(t)}{t^2}dt.
 \end{split}
\end{equation}
For the first integral, bounding trivially $S(t) \le t$, we derive
 \begin{equation}
 \label{eq:int1}
\begin{split}
 \int_{1}^{z^{1/A}} \frac{S(t)}{t^2}dt  \ll \int_{1}^{z^{1/A}} \frac{1}{t} dt \ll \log z.
\end{split}
\end{equation}
For the second integral, after applying Lemma~\ref{lem:sieveboud},  we  have
\begin{equation}
\label{eq:int2}
\int_{z^{1/A} }^{U}\frac{S(t)}{t^2}dt \ll  V(z)\int_{1}^{U}\frac{1}{x}dx \ll V(z) \log U.
\end{equation}
Substituting~\eqref{eq:int1} and~\eqref{eq:int2} in~\eqref{eq:sum d}    we obtain
 $$
 \sum_{d \in \cU_z(U)} d^{-1} \ll V(z) + \log z + V(z)\log U. 
$$
In turn, substituting this inequality in~\eqref{eq:Sigma111}  and recalling  the Mertens
 estimate~\eqref{eq:Mert} on $V(z)$, we derive
\begin{equation}
\label{eq:sigma1b}
\begin{split}
\Sigma_{1} & \ll UV(z)^{2}\(V(z) + \log  z +\frac{\log U}{\log z}\) \\
 & \ll \frac{U}{\log z}\left(1+ \frac{\log U}{(\log  z)^2} \right).
 \end{split}
\end{equation}
It  remains to bound $\Sigma_{2}$. Note that 
\begin{align*}
\Sigma_{2}& = \sum_{d \in \cU_z(U)}d\sum_{\substack{u_2 \in \cU_z\(\min \{U, z^{1/A}d \}\)\\ d \mid u_2   }}
 \frac{1}{u_2}\sum_{\substack{ u_1 \in \cU_z(u_2)\\ d \mid u_1}}1  \\
 & =\Sigma_{21} +\Sigma_{22},
\end{align*}
where
$$
\Sigma_{21} =\sum_{d \in \cU_z\(Uz^{-1/A}\)}d\sum_{\substack{ u_2 \in  \cU_z\(z^{1/A} d\)\\ d \mid u_2 }}
 \frac{1}{u_2}\sum_{\substack{ u_1 \in \cU_z(u_2)\\ d \mid u_1}}1,
$$
and
$$
\Sigma_{22} = \sum_{\substack{d \in \cU_z(U) \\ d > U z^{-1/A}}}
d\sum_{\substack{u_2 \in  \cU_z(U) \\ d \mid u_2 }}
 \frac{1}{u_2}\sum_{\substack{ u_1 \in \cU_z(u_2)\\ d \mid u_1}}1.
$$

Bounding the innermost sum of $\Sigma_{21}$ trivially, we have
$$
\Sigma_{21}   \ll  \sum_{d \in \cU_z\(Uz^{-1/A}\)}   \sum_{\substack{u_2 \in \cU_z\(z^{1/A} d\) \\ d \mid u_2 }} 1.
$$
Noting that $z \le (z^{1/A})^{A}$, an application of Lemma~\ref{lem:sieveboud} gives
\begin{equation}
\label{eq:sigma21b}
\Sigma_{21} \ll z^{1/A}V(z) \sum_{\substack{  d \in \cU_z(Uz^{-1/A})}} 1\ll V(z)U
\ll \frac{U}{\log z}.
\end{equation}
It remains to bound $\Sigma_{22}$. Recalling that 
$$
\Sigma_{22}  = \sum_{\substack{d \in \cU_z(U) \\ d > U z^{-1/A}}}d\sum_{\substack{u_2 \in \cU_z(U) \\ d \mid u_2  }}
 \frac{1}{u_2}\sum_{\substack{ u_1 \in \cU_z(u_2)\\ d \mid u_1}}1,
$$
by Lemma~\ref{lem:sieveboud} and noting that $z > (u_2/d)^{A}$ since $d >Uz^{-1/A} $, we obtain
$$
 \Sigma_{22}   \ll \sum_{\substack{d \in \cU_z(U) \\ d > U z^{-1/A}}}\sum_{\substack{ u_2 \in \cU_z(U)\\ d \mid u_2}
 } V\left(\frac{u_{2}}{d}\right). 
$$
 Let 
\begin{equation}
\label{eq:Rd}
R_{d} =\frac{ \log (U/d)}{\log z}.
\end{equation}
Then $R_{d} \ge 1$ if and only if $d \le Uz^{-1}$ and hence
$$
\Sigma_{22}   \ll \sum_{\substack{d \in \cU_z(U/z) \\ d > U z^{-1/A}}}  \sum_{1 \le r \le R_d} 
 \sum_{\substack{ u_2 \in \cU_z(dz^{r}) \\ d \mid u_2 \\  u_2 \ge  dz^{r-1}}} V\left(\frac{u_{2}}{d}\right).
$$
Fixing a value of $r$ and considering the innermost summation over $u_2$, since $z^{r-1} \le u_{2}/d$ we have $V(u_2/d) \le V(z^{r-1})$ and therefore we assert
$$
\Sigma_{22}   \ll \sum_{\substack{d \in \cU_z(U/z) \\ d > U z^{-1/A}}}  \sum_{1 \le r \le R_d }  V(z^{r-1})  \sum_{\substack{u_2 \in \cU_z(dz^{r})\\ d \mid u_2 }} 1.
$$

Appealing to Lemma~\ref{lem:sieveboud} and separating the term $r=1$, we have
\begin{align*}
\Sigma_{22} & \ll \sum_{\substack{d \in \cU_z(U/z) \\ d > U z^{-1/A}}}  \sum_{1 \le r \le R_d}  V(z^{r-1}) z^{r} \max \{ V(z), V(z^{r}) \} \\
& \ll V(z) \sum_{\substack{d \in \cU_z(U/z) \\ d > U z^{-1/A}}} z + V(z)^2\sum_{\substack{d \in \cU_z(U/z) \\ d > U z^{-1/A}}}  \sum_{2 \le r \le R_d}z^{r},
\end{align*}
so that bounding the first sum trivially gives
$$
\Sigma_{22} \ll UV(z) + V(z)^2\sum_{\substack{d \in \cU_z(U/z) \\ d > U z^{-1/A}}}  \sum_{2 \le r \le R_d}z^{r}.
$$
We see from~\eqref{eq:Rd} that $z^{R_{d}} = Ud^{-1}$ and hence
$$\sum_{2 \le r \le R_d}z^{r}\ll z^{R_d}=\frac{U}{d},$$
which implies that
$$
 \Sigma_{22}   \ll UV(z)+ UV(z)^2\sum_{\substack{d \in \cU_z(U/z) \\ d > U z^{-1/A}}}\frac{1}{d},
$$
and hence
\begin{equation}
\begin{split}
\label{eq:sigma22b}
\Sigma_{22} &\ll UV(z)^{2} + U V(z)^2\sum_{\substack{U z^{-1/A}<  d \le U}}\frac{1}{d}\\
& \ll U V(z)^2  \log z\ll \frac{U}{\log z}.
\end{split}
\end{equation}

Combining~\eqref{eq:sums2},~\eqref{eq:sigma1b},~\eqref{eq:sigma21b} and~\eqref{eq:sigma22b} we get
$$
\sum_{u_1,u_2 \in \cU_z(U)}\frac{(u_1,u_2)}{u_2} \ll \frac{U \log U}{(\log  z)^3}+\frac{U}{\log z},
$$
and hence by~\eqref{eq:sums1}
$$
I(z,M,N,U) \ll \frac{NU}{\log z} \(1 + \frac{\log U}{(\log  z)^2} \),
$$
which together with Lemma~\ref{lem:Ucard} completes the proof since $A$ is assumed sufficiently small.
\end{proof}

\section{Proof of Theorem~\ref{thm:main1}}

We fix an integer $r\ge 2$ and  proceed by induction on $N$. We formulate our induction hypothesis as follows. 
There exists some constant $c_1$, to be determined later,  such that for any integer $M$ and any integer $K<N$ we have
$$
\left|\sum_{M<n\le M+K}\chi(n)\right|\le c_1 K^{1-1/r}q^{(r+1)/4r^2} (\log q)^{1/4r},
$$
and we aim to show that 
\begin{equation}
\label{eq:wts}
\left|\sum_{M<n\le M+N}\chi(n)\right|\le c_1 N^{1-1/r}q^{(r+1)/4r^2}  (\log q)^{1/4r}
\end{equation} 
with an absolute constant $c_1$.  
Since the result is trivial for $N<q^{1/4}$ this forms the basis of our induction. We define the integers $U$ and $V$ by
\begin{equation}
\label{eq:UVdef}
U=\left \lfloor \frac{N}{16 r q^{1/2r}} \right \rfloor \mand V=\left \lfloor  r  q^{1/2r} \right \rfloor,
\end{equation}
and note that 
\begin{equation}
\label{eq:UVbound}
UV\le \frac{N}{16}.
\end{equation} 
We also note that with this choice of $V$ the bound of Lemma~\ref{lem:weil} becomes 
\begin{equation}
\label{eq:weil1}
\sum_{\lambda=1}^{q}\left|\sum_{1\le v \le V}\chi(\lambda+v) \right|^{2r}\le (2r)^{r}V^{r}q+2rV^{2r}q^{1/2}
\le (2r)^{2r} q^{3/2}.
\end{equation}

For any integers $1\le u\le U$ and $1\le v\le V$ we have 
\begin{align*}
\sum_{M<n\le M+N}\chi(n)&=\sum_{M-uv<n\le M+N-uv}\chi(n+uv)
\\ & =\sum_{M<n\le M+N}\chi(n+uv) \\ & \quad +\sum_{M-uv<n\le M}\chi(n+uv)-\sum_{M+N-uv<n\le M+N}\chi(n+uv).
\end{align*}
By~\eqref{eq:UVbound} and our induction hypothesis we have 
$$
\left|\sum_{M-uv<n\le M}\chi(n+uv)\right|\le \frac{c_1}{4} N^{1-1/r}q^{(r+1)/4r^2}  (\log q)^{1/4r},
$$
and 
$$
\left|\sum_{M+N-uv<n\le M+N}\chi(n+uv)\right|\le \frac{c_1}{4} N^{1-1/r}q^{(r+1)/4r^2}  (\log q)^{1/4r},
$$
which combined with the above implies that 
\begin{align*}
\left|\sum_{M<n\le M+N}\chi(n)-\sum_{M<n\le M+N}\chi(n+uv) \right|&\\
 \le \frac{c_1}{2} &N^{1-1/r}q^{(r+1)/4r^2} (\log q)^{1/4r}.
\end{align*}
Let 
\begin{equation}
\label{eq:zdef}
z=\exp\left((\log U)^{1/2} \right),
\end{equation}
and let $P(z)$ and  $\cU_z(U)$ be defined by~\eqref{eq:Pz} and~\eqref{eq:Uz}, respectively.

Averaging over $u\in \cU_z(U)$ and $1\le v \le V$ we see that 
\begin{equation}
\label{eq:Win}
\left|\sum_{M<n\le M+N}\chi(n)\right|\le \frac{1}{|\cU_z(U)|V}W+\frac{c_1}{2} N^{1-1/r}q^{(r+1)/4r^2} (\log q)^{1/4r},
\end{equation}
where
\begin{equation}
\label{eq:WW123}
W=\sum_{M<n\le M+N}\sum_{u\in \cU_z(U)}\left|\sum_{1\le v \le V}\chi(n+uv) \right|.
\end{equation}

By multiplying the innermost summation in~\eqref{eq:WW123} by $\chi(u^{-1})$ and collecting the values of $nu^{-1}  \pmod q$, we  arrive at
$$
W =\sum_{\lambda=1}^{q}I(\lambda)\left|\sum_{1\le v \le V}\chi(\lambda+v) \right|,
$$
where $I(\lambda)$ counts the number of solutions to the congruence
$$
n \equiv  \lambda u  \pmod{q}, \qquad 
M<n \le M+N, \  u \in \cU_z(U).
$$
Writing 
$$
W =\sum_{\lambda=1}^{q} I(\lambda)^{(r-1)/r}  (I(\lambda)^2)^{1/2r} \left|\sum_{1\le v \le V}\chi(\lambda+v) \right|,
$$
we see that  the H\"{o}lder inequality gives
$$
W^{2r}\le \left(\sum_{\lambda=1}^{q}I(\lambda) \right)^{2r-2}\left(\sum_{\lambda=1}^{q}I(\lambda)^2 \right)\left(\sum_{\lambda=1}^{q}\left|\sum_{1\le v \le V}\chi(\lambda+v) \right|^{2r}\right).
$$
From Lemma~\ref{lem:Ucard} we have 
\begin{equation}
\label{eq:lin1}
\sum_{\lambda=1}^{q}I(\lambda)=\sum_{M<n\le M+N}\sum_{u\in \cU_z(U)}1=N|\cU_z(U)|\ll \frac{NU}{\log z}.
\end{equation}
We have 
$$\sum_{\lambda=1}^{q}I(\lambda)^2= I(z,M,N,U),$$
where $I(z,M,N,U)$ is as in  Lemma~\ref{lem:multcong}. 
Since $N<q^{1/2+1/4r}$ the conditions of Lemma~\ref{lem:multcong} are satisfied, hence recalling~\eqref{eq:zdef} we have

\begin{equation}
\label{eq:mult1}
\sum_{\lambda=1}^{q}I(\lambda)^2\ll \frac{NU}{\log z}\left(1+\frac{\log{U}}{(\log  z)^2} \right)\ll \frac{NU}{\log z}.
\end{equation}
Combining and~\eqref{eq:weil1}, \eqref{eq:lin1}, and~\eqref{eq:mult1} gives%$$
$$
W^{2r}\ll (2r)^{2r} q^{3/2} \left(\frac{NU}{\log z}\right)^{2r-1},
$$ 
which using Lemma~\ref{lem:Ucard} we rewrite as 
$$
\(\frac{W}{|\cU_z(U)| }\)^{2r}\ll (2r)^{2r} q^{3/2}N^{2r-1} \frac{\log z}{U}. 
$$
Hence  
\begin{equation}
\label{eq:W prelim}
\frac{W}{|\cU_z(U)|V}\ll r \frac{(\log z)^{1/2r}}{VU^{1/2r}}N^{1-1/2r}q^{3/4r}. 
\end{equation}
Using that $r^{1/r} \le 2$ and recalling~\eqref{eq:UVdef} and~\eqref{eq:zdef}, we see that 
$$V/r \gg q^{1/2r}, \qquad  U^{1/2r} \gg N^{1/2r} q^{-1/4r^2}  , \qquad \log z \ll (\log q)^{1/2}, 
$$
where all implied constants are absolute.
Therefore we now derive from~\eqref{eq:W prelim} that 
$$
\frac{W}{|\cU_z(U)|V}\le c_0N^{1-1/r}q^{(r+1)/4r^2}(\log q)^{1/4r}
$$
for some absolute constant $c_0$. 
 Substituting the above into~\eqref{eq:Win} gives
\begin{align*}
& \left|\sum_{M<n\le M+N}\chi(n)\right|\  \\ & \qquad \le  c_0N^{1-1/r}q^{(r+1)/4r^2}(\log q)^{1/4r}+\frac{c_1}{2} N^{1-1/r}q^{(r+1)/4r^2}(\log q)^{1/4r},
\end{align*}
from which~\eqref{eq:wts} follows on taking $c_1=2c_0$.

\section*{Acknowledgement}

The authors are grateful to John Friedlander and Ilya Shkredov for very 
useful discussions. The authors are also grateful to the referee for the careful reading 
of the manuscript. 

The first and second authors were supported by  the  Australian Research Council Grant DP170100786. The third author were supported by an Australian Government Research Training Program (RTP) Scholarship.


\begin{thebibliography}{99}

\bibitem{BobGold}   J. W. Bober and L. Goldmakher, {\it P{\'o}lya--Vinogradov and the least quadratic nonresidue},  
Math. Ann. {\bf 366} (2016),  853--863. 

\bibitem{Book} A. R. Booker, {\it Quadratic class numbers and character sums\/},  Math. Comp. {\bf  75} (2006), 1481--1492.

\bibitem{Bur0}
 D.A. Burgess
    {\it The distribution of quadratic residues and non-residues}
    Mathematika, {\bf 4},  (1957),  106--112.

\bibitem{Bur1}
 D. A. Burgess, {\it On character sums and L-series, I},  Proc. London Math. Soc. {\bf 12} (1962), 193--206.
 
\bibitem{Bur2}
 D. A. Burgess, {\it On character sums and L-series, II}, Proc. London Math. Soc. {\bf 13} (1963), 524--536.
 
 \bibitem{Bur3}
 D. A. Burgess, {\it A note on the distribution of quadratic residues and non-residues}, J.  London Math. Soc. {\bf 38} (1963),  253--256].
 
\bibitem{DHG} H. G. Diamond, H. Halberstam and W. F. Galway, {\it A Higher-dimensional sieve method: 
With Procedures for Computing Sieve Functions}, Cambridge Tracts in Math. {\bf 177}, Cambridge Univ. Press, 
Cambridge, 2008.

\bibitem{Ell}  P. D. T. A. Elliott, {\it Some remarks about multiplicative functions of modulus $\le 1$},
 Analytic number theory (Allerton Park, IL, 1989), Progr. Math., 85, Birkh�user Boston, Boston, MA, 1990,  159--164.


\bibitem{FrIw}
J.~B.~Friedlander and H.~Iwaniec,
{\it Estimates for character sums},
 Proc. Amer. Math. Soc., {\bf 119} (1993), 365--372.

\bibitem{FI}
J.~B.~Friedlander and H.~Iwaniec, {\it Opera de cribro}, 
Colloquium Publications~{\bf 57}  American Math. Soc., Providence, RI., 2010.
 

\bibitem{FrGo} E.  Fromm and L. Goldmakher, {\it Improving the Burgess bound via P{\'o}lya--Vinogradov}, 
Preprint, 2017 (available from \url{http://arxiv.org/abs/1706.03002}).

\bibitem{GrSo1} A. Granville and K. Soundararajan, 
`Large character sums'
{\it J. Amer. Math. Soc.\/}  {\bf 14} (2001),  365--397.

\bibitem{GrSo2}
A.~Granville and K.~Soundararajan,
{\it Decay of mean values of multiplicative functions}, 
 Canad. J. Math. {\bf 55} (2003),   1191--1230. 

\bibitem{Hild} A. Hildebrand,  {\it A note on Burgess' character sum estimate}, 
C. R. Math. Rep. Acad. Sci. Canada {\bf 8} (1986), no. 1, 35--37. 


\bibitem{GoLa}  L. Goldmakher, and Y. Lamzouri, {\it Large even order character sums}, 
Proc. Amer. Math. Soc. {\bf 119} ( 2014), 2609--2614. 


\bibitem{IwKo} H. Iwaniec and E. Kowalski, {\it Analytic Number Theory}, 
Colloquium Publications {\bf 53}  American Math. Soc., Providence, RI., 2004.

\bibitem{LaMa} Y. Lamzouri and A. P. Mangerel, {\it     Large odd order character sums and 
improvements of the  P{\'o}lya--Vinogradov inequality},  
Preprint, 2017 (available from \url{http://arxiv.org/abs/1701.01042}).

\bibitem{MV}
H. L. Montgomery and R. C. Vaughan, {\it Exponential
sums with multiplicative coefficients},
Invent. Math. {\bf 43} (1977), 69--82

%\bibitem{MV1}
%H. L. Montgomery and R. C. Vaughan.
%{\it Multiplicative number theory. I. Classical theory}, Cambridge
%Studies in Advanced Mathematics {\bf 97}, Cambridge Univ. Press, Cambridge, 2007.


\bibitem{Pol} G. P{\'o}lya, {\it Ueber die Verteilung der quadratischen Reste und Nichtreste}, 
Nachr. Akad. Wiss. Goettingen, 1918,  21--29.  


\bibitem{Tao}
T. Tao. {\it The Elliott-Halberstam conjecture implies the Vinogradov least quadratic
nonresidue conjecture},
Algebra Number Theory
{\bf 9} (2015), 1005--1034.

\bibitem{Trev}
E. Trevi{\~n}o , {\it The Burgess inequality and the least $k$-th power non-residue}, Int. J. Number Theory
{\bf 11} (2015),  1--26.

\bibitem{Vin} I. M. Vinogradov, {\it Sur la distribution des residus and nonresidus des puissances}, 
 J. Soc. Phys. Math. Univ. Permi,  1918, 18--28.  
 
\end{thebibliography}
\end{document}